\newtheorem{theorem}{Theorem}[section]
\newtheorem{lemma}[theorem]{Lemma}
\newtheorem{proposition}[theorem]{Proposition}
\newtheorem{remark}[theorem]{Remark}
\begin{document}
\title{Multiplicity of solutions for gradient systems under strong resonance at the first eigenvalue}
\author{
Edcarlos D. da Silva \\ IME-UFG,  , Goi\^ania, Brazil \\ \textsf{edcarlos@mat.ufg.br} \\
 }
\date{}
\maketitle
$\mathbf{Abstract:}$ In this paper we establish existence and multiplicity of solutions
for an elliptic system which has strong resonance at first eigenvalue. To describe the resonance, we use an eigenvalue
problem with indefinite weight. In all results we use Variational Methods.

$\mathbf{Keywords:}$ Strong Resonance, Variational Methods, Indefinite Weights.

\section{Introduction}

In the present paper we discuss results on the existence and multiplicity of solutions for the system
\begin{equation}\label{p}
 \text{ }
 \left\{ \begin{array}[c]{cc}
     - \triangle u  = a(x)u + b(x)v - f(x, u , v) \, \, \mbox{in} \, \, \Omega & \\
     - \triangle v  = b(x)u + d(x)v - g(x, u , v)  \, \, \mbox{in} \, \, \Omega  & \\
     u = v = 0 \, \, \mbox{on}\, \, \partial \Omega, &\\
  \end{array}
\right.
\end{equation}
where $\Omega \subseteq \mathbb{R}^{N}$ is bounded smooth domain in $\mathbb{R}^{N}$, $N \geq 3$ with
$a,b,d \in C^{0}(\overline{\Omega}, \mathbb{R})$ and $f, g \in C^{1}(\overline{\Omega} \times \mathbb{R}^{2}, \mathbb{R})$.
Moreover, we assume that there is some function $F \in C^{2}(\overline{\Omega} \times \mathbb{R}^{2}, \mathbb{R})$ such that $\nabla F = (f, g)$.
 Here and throughout this paper, $\nabla F$ denotes the gradient in the variables $u$ and $v$.  Under this hypotheses, the problem
 \eqref{p} is clearly variational of the gradient type. Indeed, it is a system which has been studied by many authors, see
 \cite{BC,costa3, Odair} and references therein.

On the other hand, the system \eqref{p} represents a steady state case of reaction-diffusion systems of interest
in biology, chemistry, physics and ecology. Mathematically, reaction-diffusion systems take the form of nonlinear
 parabolic partial differential equations which
have been intensively studied during recent years, see \cite{Smoller, Pao} where many references can be found.

From a variational stand point, finding weak solutions of \eqref{p} in $H = H^{1}_{0}(\Omega) \times H^{1}_{0}(\Omega)$
is equivalent to finding critical points of the $C^{2}$ functional given by
\begin{equation}\label{J}
J(z) = \frac{1}{2} \|z\|^{2} - \dfrac{1}{2}\int_{\Omega} \langle A(x)(u,v), (u,v)\rangle dx + \int_{\Omega}F(x,u,v)dx,
\end{equation}
with  $z = (u, v) \in H$.

We work with system \eqref{p} where occurs strong resonance at infinity. More specifically, we assume
strong resonance conditions using an eigenvalue problem with weights given by the linear part of the system \eqref{p}.
Moreover, we consider the resonance at first eigenvalue.

Now, we introduce our eigenvalue problem with weights. Let us denote by $\mathcal{M}_{2}(\Omega)$ the set of all
continuous, cooperative and symmetric matrices $A$ of order 2, given by

$$A(x)= \left(\begin{array}{cc}
  a(x) & b(x) \\
  b(x) & d(x) \\
\end{array} \right),$$
where the functions $ a,b,d \in C(\overline{\Omega}, \mathbb{R})$
satisfy the following hypotheses:
\begin{flushleft}
$\left(M_{1}\right)$ A is cooperative, that is, $b(x)\geq 0$.
 \end{flushleft}
\begin{flushleft}
$\left(M_{2}\right)$ There is $x_{0} \in \Omega$ such that $a(x_{0}) > 0$ or $d(x_{0}) > 0.$
\end{flushleft}

Given $A \in \mathcal{M}_{2}(\Omega)$, consider the weighted
eigenvalue problem

\begin{equation}\label{LPi}
\left\{\begin{array}{c}
  - \triangle \left(\begin{array}{c}
       u \\
       v \\
      \end{array}\right)  = \lambda A(x)\left(\begin{array}{c}
       u \\
       v \\
      \end{array}\right)  \, \, \mbox{in} \, \,
      \Omega \\
  u = v = 0 \, \, \mbox{on}\, \, \partial \Omega.\\
\end{array} \right.
\end{equation}

Now, using the conditions $(M_{1})$ and $(M_{2})$ above, and applying the spectral theory for compact operators,
 we get a sequence of eigenvalues

$$  0 < \lambda_{1}(A) <  \lambda_{2}(A) \leq \lambda_{3}(A) \leq \ldots $$
such that $\lambda_{k}(A)\rightarrow + \infty$ as $ k \rightarrow
\infty$ see \cite{Ch,DG,Odair}. Here, each eigenvalue $\lambda_{k}(A), k \geq 1$ has finite multiplicity.

Next, we state the assumptions and the main results in this paper. First, we make the following hypothesi:
\begin{flushleft}
$(SR)$ There exist $h \in L^{1}(\Omega)$ such that
\end{flushleft}
\begin{equation}
\lim_{|z| \rightarrow \infty} \nabla F(x,z) = 0 \, \mbox{and} \,\, |F(x,z)|
\leq h(x), \, \mbox{a.e.} \, x \in \Omega,\, \forall \, z \in \mathbb{R}^{2}.
\end{equation}
In this way, we define the following functions:
\begin{eqnarray}\label{au}
  T^{+}(x) = \liminf_{u \rightarrow \infty \atop v \rightarrow \infty } F(x, u,v),& & S^{+}(x) =
  \limsup_{u \rightarrow \infty \atop v \rightarrow \infty } F(x, u,v), \nonumber \\
  T^{-}(x) = \liminf_{u \rightarrow - \infty \atop v \rightarrow - \infty } F(x, u,v),& & S^{-}(x) =
  \limsup_{u \rightarrow -\infty \atop v \rightarrow - \infty } F(x, u,v). \nonumber \\
\end{eqnarray}
Here, the functions above define functions in $L^{1}(\Omega)$ and the limits are take a. e. and uniformly in $x \in \Omega$.
Clearly, we have $T^{-}(x) \leq T^{+}(x)$ and $S^{-}(x) \leq S^{+}(x), \, \mbox{a.e} \, \, x \, \in \Omega.$

Now, we consider the additional hypotheses:

\begin{flushleft}
$(H1)F(x, z) \geq \dfrac{1}{2}(1 - \lambda_{2})\langle A(x)z, z \rangle + b_{1}|\Omega|^{-1}, \forall \, (x, z) \in \Omega \times \mathbb{R}^{2}$.
\end{flushleft}

\begin{flushleft}
$(H2) \langle A(x)z, z \rangle \geq 0, \forall \, (x, z) \in \Omega \times \mathbb{R}^{2}.$
\end{flushleft}
Thus, we can prove that the associated functional $J$ has the saddle geometry. Hence, we have the following result:
\begin{theorem}\label{t1}
Suppose $(SR),(H1), (H2)$. Then problem (\ref{p}) has at least one solution $z_{1} \in H$.
\end{theorem}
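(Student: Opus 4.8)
The plan is to obtain solutions of \eqref{p} as critical points of the $C^2$ functional $J$ in \eqref{J} and to produce one by the Saddle Point Theorem. Since the resonance is at the first eigenvalue we have $\lambda_1(A)=1$; let $\phi_1>0$ be the associated (simple) eigenfunction, set $V=\mathrm{span}\{\phi_1\}$ and $W=V^{\perp}$ in $H$. Writing $z=\sum_k c_k\phi_k$ and using $\int_\Omega\langle A\phi_k,\phi_k\rangle\,dx=\lambda_k^{-1}\|\phi_k\|^2$, the quadratic part $Q(z)=\frac12\|z\|^2-\frac12\int_\Omega\langle A z,z\rangle\,dx$ equals $\frac12\sum_k(1-\lambda_k^{-1})c_k^2\|\phi_k\|^2$, so $Q\equiv 0$ on $V$ while $Q(w)\ge \frac12(1-\lambda_2^{-1})\|w\|^2$ on $W$ with $1-\lambda_2^{-1}>0$. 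This degenerate direction $V$ is the source of the resonance and the reason a naive minimization fails.

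For the geometry, on $W$ I would insert $(H1)$ into $J$: integrating $(H1)$ and using $\lambda_2\int_\Omega\langle Aw,w\rangle\,dx\le\|w\|^2$ for $w\in W$ (because $\lambda_2\le\lambda_k$ for $k\ge 2$) gives $J(w)\ge\frac12\|w\|^2-\frac{\lambda_2}{2}\int_\Omega\langle Aw,w\rangle\,dx+b_1\ge b_1$, so $J$ is bounded below on $W$ by $b_1$. On $V$ we have $J(t\phi_1)=\int_\Omega F(x,t\phi_1)\,dx$ since $Q\equiv 0$ there; as $\phi_1>0$ and $|F|\le h\in L^1(\Omega)$, dominated convergence with the definition of $S^{\pm}$ yields $\limsup_{t\to\pm\infty}\int_\Omega F(x,t\phi_1)\,dx\le\int_\Omega S^{\pm}\,dx$. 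The hypotheses are arranged so that $\max\{\int_\Omega S^+\,dx,\int_\Omega S^-\,dx\}<b_1$; hence for $R$ large $\sup_{\|v\|=R,\,v\in V}J(v)<b_1\le\inf_W J$, which is precisely the separation required by the Saddle Point Theorem. Condition $(H2)$ is used to keep the form $\langle A\cdot,\cdot\rangle$ nonnegative throughout these estimates.

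The main difficulty is compactness, since strong resonance ($\nabla F\to 0$, $F$ bounded) breaks the Palais--Smale condition at the resonance levels. I would show $(PS)_c$ holds for every $c$ outside the bad set $B:=[\int_\Omega T^+\,dx,\int_\Omega S^+\,dx]\cup[\int_\Omega T^-\,dx,\int_\Omega S^-\,dx]$. Given $z_n=t_n\phi_1+w_n$ with $J(z_n)\to c$ and $J'(z_n)\to 0$, testing against $w_n$ and using the $A$-orthogonality of $\phi_1$ and $W$ together with the coercivity of $Q$ on $W$ and the boundedness of $\nabla F$ forces $\{w_n\}$ to be bounded; moreover the $W$-component of $J'(z_n)$ reads $Lw_n+o(1)$ with $L$ coercive on $W$, so $w_n\to 0$ whenever $|t_n|\to\infty$. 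If $t_n\to\pm\infty$ then $z_n\to\pm\infty$ pointwise and $J(z_n)\to$ a value lying in $B$; thus for $c\notin B$ the sequence $\{t_n\}$, hence $\{z_n\}$, is bounded, and compactness of the lower-order gradient term gives a convergent subsequence. As the minimax value satisfies $c\ge b_1>\sup B$, we conclude $(PS)_c$ holds at the critical level.

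Finally, the Saddle Point Theorem applied with the geometry above and $(PS)_c$ at the minimax level $c\ge b_1$ yields a critical point $z_1\in H$ of $J$, i.e., a weak solution of \eqref{p}, proving Theorem \ref{t1}. I expect the compactness step to be the crux: one must show that a Palais--Smale sequence can escape only along the resonant direction $\phi_1$ and only at levels inside $B$, so that the gap $b_1>\sup B$ provided by $(H1)$ and $(SR)$ confines the critical value to the range where compactness survives.
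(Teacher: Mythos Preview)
Your overall strategy---saddle point geometry with the splitting $H=V\oplus W$, plus a refined $(PS)_c$ analysis along the resonant direction---matches the paper. The gap is in the linking step: you assert that ``the hypotheses are arranged so that $\max\{\int_\Omega S^+,\int_\Omega S^-\}<b_1$,'' but nothing in $(SR)$, $(H1)$, $(H2)$ forces this. Hypothesis $(H1)$ is a \emph{lower} bound on $F$ whose right-hand side tends to $-\infty$ with $|z|$ (since $1-\lambda_2<0$ and $\langle Az,z\rangle\ge 0$ by $(H2)$), so it places no upper constraint on $S^\pm$; for instance $F(x,z)=h(x)\arctan|z|^2$ with $h>0$ satisfies $(SR)$ and $(H1)$ (evaluating at $z=0$ forces $b_1\le 0$) while $\int S^\pm=(\pi/2)\int h>0\ge b_1$. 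Thus the strict inequality $\sup_{\partial Q\cap V}J<\inf_W J$ that the classical Rabinowitz Saddle Point Theorem requires can fail, and with it your guarantee that the minimax level sits above $\max\{\int S^\pm\}$ where $(PS)_c$ is available.

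The paper avoids this by appealing to Silva's linking theorem (Theorem~1.11 in \cite{S}), a version tailored to resonance that does not need the strict separation. The extra ingredient it uses is the \emph{coercivity} of $J$ on $W=V_1^\perp$: from $(SR)$ and the variational inequality \eqref{v2} one has $J(w)\ge\frac12(1-\lambda_2^{-1})\|w\|^2-\|h\|_{L^1}\to\infty$ as $\|w\|\to\infty$ in $W$ (item~(a) of Proposition~\ref{p1}), which you neither state nor exploit. Together with $J\le\alpha$ on $V_1$ and $J\ge b_1$ on $W$, Silva's theorem produces a critical point. So your compactness analysis is essentially the paper's, but you should replace the unjustified inequality $b_1>\max\{\int S^\pm\}$ by the coercivity estimate on $W$ and invoke Silva's result rather than Rabinowitz's.
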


Now, we take $\nabla F(x,0,0) \equiv 0,F(x,0, 0) \equiv 0$ and $h_{1}=h_{2} \equiv 0$.
Then the problem \eqref{p} admits the trivial solution $(u,v) \equiv  0$. In this case, the main
 point is to assure the existence of nontrivial solutions. The existence of these solutions depends
 mainly on the behaviors of $F$ near the origin and near infinity. Thus, we consider the following additional hypotheses:

\begin{flushleft}
$(H3)$ There exist $\alpha \in (0 , \lambda_{1})$ and $\delta > 0$
such that  $$F(x,z) \geq  \dfrac{1 - \alpha}{2} \langle A(x)z , z
\rangle, \forall \, x \in \Omega \, \,\mbox{and} \,\, |z| < \delta.$$
\end{flushleft}

\begin{flushleft}
$(H4) \int_{\Omega} S^{+}(x)dx \leq 0 \,\, \mbox{and} \, \,
\int_{\Omega} S^{-}(x)dx \leq 0. $
\end{flushleft}

\begin{flushleft}
$(H5)$ There exist $t \in \mathbb{R}^{*}$ such that
$$\int_{\Omega} F(x, t \Phi_{1}) dx < \min \left\{\int_{\Omega} T^{-}(x)dx, \int_{\Omega} T^{+}(x)dx \right\}.$$
\end{flushleft}
In this way, applying the Ekeland's Variational Principle and the Mountain Pass Theorem, we can prove the following multiplicity results:
\begin{theorem}\label{t2}
Suppose $(SR), (H2), (H3), (H4)$ and $(H5)$. Then problem (\ref{p}) has at least two nontrivial solutions.
\end{theorem}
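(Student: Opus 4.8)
The plan is to produce one nontrivial solution as a global minimizer of $J$ (via Ekeland's Variational Principle) sitting at a \emph{negative} energy level, and a second one as a Mountain Pass critical point at a \emph{positive} level; since $J(0)=0$ lies strictly between the two energies, both critical points will be automatically nontrivial and mutually distinct. Throughout I abbreviate the quadratic part of $J$ by $Q(z)=\tfrac12\|z\|^{2}-\tfrac12\int_{\Omega}\langle A(x)z,z\rangle\,dx$.

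First I would record the local and global geometry of $J$. Using $(H3)$ together with $(H2)$ and the variational characterization $\|z\|^{2}\ge\lambda_{1}\int_{\Omega}\langle A(x)z,z\rangle\,dx$ of the first eigenvalue, for $z$ small in $H$ one gets $J(z)\ge\tfrac12\|z\|^{2}-\tfrac{\alpha}{2}\int_{\Omega}\langle A(x)z,z\rangle\,dx\ge\tfrac12\bigl(1-\tfrac{\alpha}{\lambda_{1}}\bigr)\|z\|^{2}$, so that, since $\alpha<\lambda_{1}$, the origin is a strict local minimum: there are $\rho,\beta>0$ with $J\ge\beta$ on $\{\|z\|=\rho\}$ and $J\ge0$ on $\overline{B}_{\rho}$. (The one delicate point here is passing from the pointwise hypothesis $|z|<\delta$ in $(H3)$ to a smallness condition on $\|z\|$; I would split $\Omega$ into $\{|z|<\delta\}$ and $\{|z|\ge\delta\}$, using $|F|\le h$ from $(SR)$ and a Chebyshev/Sobolev estimate to absorb the second region.) Since the resonance occurs at the first eigenvalue, $\lambda_{1}=1$ and $Q$ vanishes along $t\Phi_{1}$, whence $J(t\Phi_{1})=\int_{\Omega}F(x,t\Phi_{1})\,dx$; thus $(H5)$ furnishes a point $z^{*}=t\Phi_{1}$ with $J(z^{*})<\min\{\int_{\Omega}T^{-}\,dx,\int_{\Omega}T^{+}\,dx\}\le0$, and $J(z^{*})<0$ forces $\|z^{*}\|>\rho$. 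Finally $(H2)$ gives $Q(z)\ge0$, so $(SR)$ yields $J(z)\ge-\|h\|_{1}$ and $J$ is bounded below.

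The heart of the argument is the compactness analysis, which I expect to be the main obstacle. I would show that $J$ satisfies $(PS)_{c}$ for every $c$ outside the union $[\int_{\Omega}T^{-}\,dx,\int_{\Omega}S^{-}\,dx]\cup[\int_{\Omega}T^{+}\,dx,\int_{\Omega}S^{+}\,dx]$. Taking $z_{n}$ with $J(z_{n})\to c$ and $J'(z_{n})\to0$, and assuming $\|z_{n}\|\to\infty$, I decompose $z_{n}=s_{n}\Phi_{1}+y_{n}$ with $y_{n}$ in the $\Phi_{1}$-complement $W$, on which $Q$ is coercive because $\lambda_{2}>\lambda_{1}=1$. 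Testing $J'(z_{n})$ against $y_{n}$ and using $\nabla F(x,z_{n})\to0$ from $(SR)$ gives $c_{2}\|y_{n}\|^{2}\le(\varepsilon_{n}+o(1))\|y_{n}\|$, hence $y_{n}\to0$; consequently $|s_{n}|\to\infty$, and since $\Phi_{1}>0$ we have $z_{n}(x)\to\pm\infty$ a.e., so by dominated convergence $J(z_{n})=Q(y_{n})+\int_{\Omega}F(x,z_{n})\,dx$ tends to a limit lying in $[\int_{\Omega}T^{\pm}\,dx,\int_{\Omega}S^{\pm}\,dx]$. This places every possible limit level inside the two forbidden intervals, so an unbounded $(PS)_{c}$ sequence cannot occur for $c$ outside them; boundedness then yields a strongly convergent subsequence by the usual Fredholm/compact-embedding argument, establishing $(PS)_{c}$.

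With this in hand the two solutions follow. By $(H4)$ the forbidden intervals lie in $(-\infty,0]$. Since $c_{0}:=\inf_{H}J\le J(z^{*})<\min\{\int_{\Omega}T^{-}\,dx,\int_{\Omega}T^{+}\,dx\}$, the level $c_{0}$ sits strictly below both intervals, so $(PS)_{c_{0}}$ holds and Ekeland's Variational Principle produces a minimizer $z_{1}$ with $J(z_{1})=c_{0}<0$; since $J(0)=0$, we get $z_{1}\neq0$. For the second solution, the geometry $J(0)=0$, $J\ge\beta>0$ on $\{\|z\|=\rho\}$ and $J(z^{*})<0$ with $\|z^{*}\|>\rho$ gives a Mountain Pass level $c_{MP}\ge\beta>0$, which by $(H4)$ lies strictly above both forbidden intervals; hence $(PS)_{c_{MP}}$ holds and the Mountain Pass Theorem yields $z_{2}$ with $J(z_{2})=c_{MP}>0$. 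Then $J(z_{1})<0<J(z_{2})$ shows that $z_{1},z_{2}$ are two distinct nontrivial solutions of \eqref{p}.
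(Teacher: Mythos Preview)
Your proposal is correct and follows essentially the same route as the paper: obtain one nontrivial solution as a global minimizer via Ekeland's principle at a negative level, another via the Mountain Pass Theorem at a positive level, and distinguish the two by their energies. The only noticeable technical differences are that the paper handles the passage from the pointwise hypothesis $|z|<\delta$ in $(H3)$ to the $H$-smallness condition by the standard device $F(x,z)\ge\tfrac{1-\alpha}{2}\langle A(x)z,z\rangle-C_{\epsilon}|z|^{p}$ with $p\in(2,2^{*})$ rather than your Chebyshev/splitting argument, and it states the $(PS)_{c}$ result in the cruder form $c<\min\{\int T^{-},\int T^{+}\}$ or $c>\max\{\int S^{-},\int S^{+}\}$ rather than your ``outside two intervals'' formulation; neither difference is substantive.
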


For the last result, we minimize the functional under some subsets on $H$. In this case, we consider the following additional hypothesi:

\begin{flushleft}
$(H6)$ There are $t^{-} < 0 < t^{+} $ such that
$$\int_{\Omega} F(x, t^{\pm}\Phi_{1}) dx < \min \left\{\int_{\Omega} T^{-}(x)dx, \int_{\Omega} T^{+}(x)dx \right\}$$
\end{flushleft}

Hence, combining the ideas developed in Theorem \eqref{t2} we have the following multiplicity result:

\begin{theorem}\label{t3}
Suppose $(SR),(H1),(H2), (H3), (H4)$ and $(H6)$. Then problem (\ref{p}) has at least
three nontrivial solutions.
\end{theorem}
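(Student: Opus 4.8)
The plan is to produce the three nontrivial solutions as two \emph{deep} local minima lying strictly below the strong-resonance barrier, together with one minimax critical point lying strictly above it. Throughout I write $H=V_1\oplus W$, where $V_1=\mathrm{span}\{\Phi_1\}$ is the first eigenspace and $W=V_1^{\perp}$, decompose $z=t\Phi_1+w$, and set $Q(z)=\tfrac12\|z\|^2-\tfrac12\int_\Omega\langle A(x)z,z\rangle\,dx$ so that $J=Q+\int_\Omega F(x,\cdot)\,dx$. The first ingredient I would establish is a localized Palais--Smale analysis. By the resonance at $\lambda_1$, $Q$ vanishes on $V_1$ and is positive definite on $W$ (comparable to $(1-1/\lambda_2)\|\cdot\|^2$), and $(H2)$ gives $Q\ge 0$, so $J$ is bounded below. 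Given $z_n=t_n\Phi_1+w_n$ with $J(z_n)\to c$ and $J'(z_n)\to 0$, I would test $J'(z_n)$ against $w_n$; using the orthogonality $\int_\Omega\langle A\Phi_1,w\rangle=0$ for $w\in W$ and $(SR)$ (so $\nabla F$ is bounded and $\nabla F(x,z_n)\to 0$ where $|z_n|\to\infty$), the coercivity of $Q$ on $W$ forces $\{w_n\}$ to be bounded, and in the noncompact case $t_n\to\pm\infty$ it forces $w_n\to 0$. Then $z_n\to\pm\infty$ a.e. and, by dominated convergence together with Fatou applied to $h\pm F\ge 0$, the only possible noncompact levels lie in $\mathcal I:=[\int_\Omega T^-,\int_\Omega S^-]\cup[\int_\Omega T^+,\int_\Omega S^+]$. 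By $(H4)$ and $T^{\pm}\le S^{\pm}$ we get $\mathcal I\subseteq(-\infty,0]$. Hence $J$ satisfies $(PS)_c$ for every $c\notin\mathcal I$; in particular for all $c<b^{\ast}:=\min\{\int_\Omega T^+,\int_\Omega T^-\}$ and for all $c>0$.

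Next I set up the geometry. From $(H3)$, for small $\|z\|$ one has $J(z)\ge\tfrac12(1-\alpha/\lambda_1)\|z\|^2$, so $J(0)=0$ is a strict local minimum and there are $\rho,\beta>0$ with $J\ge\beta$ on $\{\|z\|=\rho\}$. From $(H6)$, since $Q(t\Phi_1)=0$ we get $J(t^{\pm}\Phi_1)=\int_\Omega F(x,t^{\pm}\Phi_1)\,dx<b^{\ast}\le 0$, so both points lie in the open sublevel set $\mathcal B:=\{J<b^{\ast}\}$ and outside the ball $\{\|z\|\le\rho\}$. The crucial topological step is the separation of $\mathcal B$: using $(H1)$ I would show $J|_W\ge b_1$ (the lower-order term exactly cancels against $-\tfrac12\int\langle Aw,w\rangle$ using $\int_\Omega\langle Aw,w\rangle\le\|w\|^2/\lambda_2$ on $W$), so that $\mathcal B\cap W=\varnothing$ whenever $b_1\ge b^{\ast}$. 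Consequently $\mathcal B\subseteq\{t>0\}\sqcup\{t<0\}$, and the connected components $\mathcal B^{+}\ni t^{+}\Phi_1$ and $\mathcal B^{-}\ni t^{-}\Phi_1$ are genuinely distinct.

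I would then obtain the two minima by minimizing on each component: put $c^{\pm}=\inf_{\mathcal B^{\pm}}J$. These are finite and satisfy $c^{\pm}\le J(t^{\pm}\Phi_1)<b^{\ast}$, so Ekeland's variational principle yields Palais--Smale sequences at the levels $c^{\pm}<b^{\ast}$; since $c^{\pm}\notin\mathcal I$, the $(PS)_{c^{\pm}}$ condition produces minimizers $z^{\pm}$, which are interior (their value $c^{\pm}<b^{\ast}$ keeps them off $\partial\mathcal B^{\pm}$), hence critical points with $J(z^{\pm})=c^{\pm}<0=J(0)$. Thus $z^{+},z^{-}$ are nontrivial, and distinct as they sit in different components. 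For the third solution I would apply the Mountain Pass Theorem between the two valleys $z^{+}$ and $z^{-}$: every path joining $\{t>0\}$ to $\{t<0\}$ meets $W$, so the minimax level obeys $c^{\ast}\ge b_1\ge b^{\ast}>c^{\pm}$, giving the required strict mountain-pass inequality. Taking $b_1$ above $\max\{\int_\Omega S^+,\int_\Omega S^-\}$ (admissible since $(H4)$ places this quantity in $(-\infty,0]$) puts $c^{\ast}$ above $\mathcal I$, where $(PS)_{c^{\ast}}$ holds; equivalently, connecting $0$ to $z^{+}$ one crosses $\{\|z\|=\rho\}$ and $(H3)$ forces $c^{\ast}\ge\beta>0$. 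Either way one gets a critical point $z^{\ast}$ with $J(z^{\ast})=c^{\ast}>0>c^{\pm}=J(z^{\pm})$ and $J(0)=0$, so $z^{\ast}$ is nontrivial and different from $z^{\pm}$, which furnishes the third solution.

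I expect the main obstacle to be precisely the loss of compactness: strong resonance makes $J$ fail $(PS)_c$ on the entire band $\mathcal I$, so every variational level employed must be certified to avoid $\mathcal I$. This is what dictates the whole architecture --- the minima are pushed strictly below $b^{\ast}=\min\{\int_\Omega T^{\pm}\}$ by $(H6)$, while $(H4)$ collapses $\mathcal I$ into $(-\infty,0]$ so that the positive mountain-pass level is automatically admissible. The second delicate point is the decomposition of $\mathcal B$ into two components: this is where the coercivity wall $J|_W\ge b_1$ supplied by $(H1)$ is indispensable, since without it the two candidate minima could be joined inside $\mathcal B$ and would collapse to a single critical point, leaving only two solutions as in Theorem \ref{t2}.
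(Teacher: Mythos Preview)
Your proof takes essentially the same route as the paper: the Palais--Smale analysis isolating the bad band, two minima on opposite sides of $V_1^{\perp}$ separated by the wall $J|_{W}\ge b_1$ coming from $(H1)$ and pushed below $b^{\ast}$ by $(H6)$, and a third, positive-energy mountain-pass point obtained from the strict local minimum at the origin via $(H3)$. The paper's only cosmetic difference is that it minimizes directly over the closed half-spaces $A^{\pm}=\{t\Phi_1+w:\pm t\ge 0,\ w\in V_1^{\perp}\}$ and then checks that the minimizers avoid the common boundary $V_1^{\perp}$, rather than working with connected components of a sublevel set as you do.

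One caution: in your first mountain-pass option (joining $z^{+}$ to $z^{-}$ across $W$) you write ``taking $b_1$ above $\max\{\int_\Omega S^{+},\int_\Omega S^{-}\}$'', but $b_1$ is a \emph{fixed} constant in hypothesis $(H1)$, not a free parameter you may adjust, so that route is not available in general. Your alternative through the origin (level $c^{\ast}\ge\beta>0$, which lies outside $\mathcal I$ by $(H4)$) is exactly the paper's argument and requires no extra assumption on $b_1$.
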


In our main theorems we consider the case when the functions defined by  \eqref{au} are nonpositive. Indeed, we have some interesting
geometries produced some multiplicity results.  For the case where the functions in \eqref{au} are positive are treat by similar arguments.
Thus, with further hypotheses, we have at least three nontrivial solutions for problem \eqref{p}. More specifically, we have two solutions
with negative energy and one solution produced by Theorem \ref{t1}. We leave the details for the reader.

 Now, we compare our results with the previous results in the literature. Most of previous results treated problem \eqref{p} using variational
 methods, sub-super solutions method or degree theory, see \cite{BC,Ch, Ch3, Odair} and references therein. In these works, the authors proved
 several results on existence and multiplicity for problem \eqref{p}. In paper \cite{Ch},
  K.C. Chang  consider the problem \eqref{p} with nonresonance conditions using variational methods and Morse theory. In paper \cite{BC},
  T. Bartsch, K.C. Chang and Z. Q. Wang  obtained sign-solutions under resonant conditions. They used the conditions of the Ahmad, Lazer and
  Paul type introduced in \cite{A}.  In paper \cite{Ch3}, K.C. Chang consider the problem \eqref{p} using sub-super solution method and degree
   theory. In paper \cite{Odair}, Furtado and de Paiva used the non-quadraticity condition at infinity and Morse theory.
    However, little has been done to for the resonant case. For example, the strong resonance case do not considered.

In this article, we explore the strong resonance case at the first eigenvalue. For this case, we prove the functional $J$ has an interesting
geometry under some hypotheses on $F$. Thus, we obtain different results on existence and multiplicity of solutions for problem \eqref{p} combining some min-max theorems which complement previous results in the literature.

The paper is organized as follows: in Section 2, we recall the abstract framework of problem (\ref{p}) and highlight the properties for the
eigenvalue problem (\ref{LPi}). In section 3 we prove some auxiliary results involving the Palais-Smale condition and some
properties on the geometry for the functional $J$. In Section 4 is devoted to the proofs our main Theorems.

\section{Abstract Framework and Eigenvalue Problem  for the System (\ref{p})}\label{eigenvalue}
Initially, we write $H = H^{1}_{0}(\Omega) \times H^{1}_{0}(\Omega)$ to denote the Hilbert space with the norm
$$\|z\|^{2} = \int_{\Omega} |\nabla u|^{2} + |\nabla v|^{2} dx, z = (u , v) \in H.$$
Moreover, we denote by $ \langle , \rangle $ the scalar product in $H$.

Again, we remember the properties of the eigenvalue problem \\
\begin{equation}\label{LP}
\left\{\begin{array}{c}
  - \triangle \left(\begin{array}{c}
       u \\
       v \\
      \end{array}\right)  = \lambda A(x)\left(\begin{array}{c}
       u \\
       v \\
      \end{array}\right)  \, \, \mbox{in} \, \,
      \Omega \\
  u = v = 0 \, \, \mbox{on}\, \, \partial \Omega.\\
\end{array} \right.
\end{equation}

Let $A \in \mathcal{M}_{2}(\Omega)$, then there is only a compact
self-adjoint linear operator $T_{A} : H \rightarrow H$ such that: $\langle T_{A} z, w \rangle = \int_{\Omega} < A(x)z ,w > dx, \forall z,
w \in H.$ This operator has the following propriety: $\lambda$ is eigenvalue of (\ref{LP}) if, and only if, $T_{A} z = \frac{1}{\lambda}
z,$ for some $z \in H$. Thus, for each matrix $A \in \mathcal{M}_{2}(\Omega)$ there exist a sequence of eigenvalues for system (\ref{LP}) and
a Hilbertian basis for $H$ formed by eigenfunctions of (\ref{LP}).
Moreover, denoting by $\lambda_{k}(A)$ the eigenvalues of problem (\ref{LP}) and $\Phi_{k}(A)$ the associated eigenfunctions, then $0
 < \lambda_{1}(A) < \lambda_{2}(A) \leq  \ldots  \leq \lambda_{k}(A) \rightarrow \infty$ if $k \rightarrow \infty$, and we have
$$ \dfrac{1}{\lambda_{k}(A)} = \sup \{ \langle T_{A} z, z\rangle, \|z\| = 1, z \in V_{k-1}^{\perp}\},$$
where $ V_{k - 1}^{\perp}= span\{\Phi_{1}(A), \ldots,\Phi_{k}(A)\}$ with $k > 1$. Thus, we get $H = V_{k}\bigoplus V_{k}^{\perp}$
for $k \geq 1$, and the following variational inequalities holds:

\begin{equation}\label{v0}
\|z\|^{2} \geq \lambda_{1}(A) \langle T_{A}z,z \rangle, \forall \, z \in H,
\end{equation}

\begin{equation}\label{v1}
\|z\|^{2} \leq \lambda_{k}(A) \langle T_{A}z,z \rangle, \forall \, z \in V_{k}, k \geq 1,
\end{equation}

\begin{equation}\label{v2}
\|z\|^{2} \geq \lambda_{k + 1}(A) \langle T_{A}z,z \rangle, \forall \, z \in V_{k}^{\perp}, k \geq 1.
\end{equation}
The variational inequalities will be used in the next section. Now, we would like to mention that the eigenvalue $\lambda_{1}(A)$
is positive, simples and isolated. Moreover, we have that the associated eigenfunction $\Phi_{1}(A)$ is positive in $\Omega$.
In other words, we have a Hess-Kato theorem for eigenvalue problem \eqref{LP} proved by Chang, see \cite{Ch}.
For more properties to the eigenvalue problem (\ref{LP}) see \cite{Ch3, DG,Odair} and references therein.

\section{Preliminary Results}

In this section we prove some results needed in the proof of our main theorems.
First, we prove the Palais-Smale condition at some levels for the functional $J$. Then we describe some results under the geometry for $J$.

First, we recall that $J: H \rightarrow \mathbb{R}$ is
said to satisfy Palais-Smale condition at the level $c \in \mathbb{R}$
((PS)$_{c}$ in short), if any sequence $ (z_{n})_{n \in \mathbb{N}} \subseteq H$ such that
$$J(z_{n}) \rightarrow c \, \, \mbox{and} \,\, J^{'}(z_{n}) \rightarrow 0 $$
as $n \rightarrow \infty $, possesses a convergent subsequence in $H.$ Moreover, we say that $J$ satisfies $(PS)$ condition when we have
$(PS)_{c}$ for all $c \in \mathbb{R}$.

\begin{lemma}\label{l1}
Suppose $(SR)$. Then the functional $J$ has the $(PS)_{c}$ condition whenever $c < \min \left\{\int_{\Omega} T^{-}(x)dx, \int_{\Omega}
T^{+}(x)dx \right\}$ or $c > \max \left\{\int_{\Omega} S^{-}(x)dx, \int_{\Omega} S^{+}(x)dx \right\}$.
\end{lemma}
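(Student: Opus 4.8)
The plan is to follow the classical strategy for strong resonance at the first eigenvalue, exploiting that the quadratic part of $J$ degenerates exactly on $V_1 = \mathrm{span}\{\Phi_1(A)\}$. Writing $\Psi(z) = \int_\Omega F(x,z)\,dx$ and using the operator $T_A$, I rewrite $J(z) = \tfrac12(\|z\|^2 - \langle T_A z, z\rangle) + \Psi(z)$ and $J'(z) = (I-T_A)z + \Psi'(z)$, where $\langle \Psi'(z),\phi\rangle = \int_\Omega \langle \nabla F(x,z),\phi\rangle\,dx$. Condition $(SR)$ forces $\nabla F$ to be bounded on $\overline{\Omega}\times\mathbb{R}^2$, from which $\Psi'$ is compact and sends bounded sets to bounded sets. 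Since the resonance occurs at $\lambda_1(A)=1$, the eigenfunction relation $T_A\Phi_1=\Phi_1$ gives that $V_1$ and $V_1^{\perp}$ are invariant and mutually orthogonal for the symmetric form $B(z,w)=\langle z,w\rangle-\langle T_A z,w\rangle$; moreover $B$ vanishes on $V_1$, while by \eqref{v2} with $k=1$ one has $B(w,w)\geq(1-\lambda_2^{-1})\|w\|^2$ for all $w\in V_1^{\perp}$.

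Let $(z_n)$ be a $(PS)_c$ sequence and decompose $z_n = s_n\Phi_1 + w_n$ with $w_n\in V_1^{\perp}$. The first step is to bound $(w_n)$: testing $J'(z_n)\to 0$ against $w_n$ and using $B(z_n,w_n)=B(w_n,w_n)$ together with the coercivity above, while estimating $|\int_\Omega\langle\nabla F(x,z_n),w_n\rangle\,dx|\leq C\|w_n\|$ via the $L^\infty$-bound on $\nabla F$ and Poincar\'e's inequality, yields $(1-\lambda_2^{-1})\|w_n\|^2\leq(1+C)\|w_n\|$, hence $(w_n)$ is bounded. At this point I split into two cases according to whether $(s_n)$ is bounded.

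If $(s_n)$ is bounded, then $(z_n)$ is bounded and the standard argument applies: passing to a subsequence $z_n\rightharpoonup z$, compactness of $T_A$ and of $\Psi'$ makes the right-hand side of $z_n = T_A z_n - \Psi'(z_n) + J'(z_n)$ converge strongly, so $z_n\to z$ and $(PS)_c$ holds. The substantive case is $|s_n|\to\infty$, which I intend to rule out entirely under the stated restriction on $c$. Passing to a subsequence I may assume $s_n\to+\infty$ or $s_n\to-\infty$; since $\Phi_1(A)>0$ in $\Omega$ by the Hess--Kato property and $(w_n)$ is bounded, hence $w_n\to w$ a.e. along a subsequence, both components of $z_n(x)$ tend to $+\infty$ (resp. $-\infty$) a.e. By $(SR)$, $\nabla F(x,z_n)\to 0$ a.e. and is dominated, so $\Psi'(z_n)\to 0$ strongly in $H$; projecting $J'(z_n)$ onto $V_1^{\perp}$ and inverting the isomorphism $(I-T_A)|_{V_1^{\perp}}$ then forces $w_n\to 0$.

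Finally, with $w_n\to 0$ I compute $J(z_n)=\tfrac12 B(w_n,w_n)+\int_\Omega F(x,z_n)\,dx$, whose first term vanishes in the limit. Applying Fatou's lemma to $F(x,z_n)+h(x)\geq 0$ and reverse Fatou to $h(x)-F(x,z_n)\geq 0$, and using that $\liminf_n F(x,z_n(x))\geq T^{+}(x)$ and $\limsup_n F(x,z_n(x))\leq S^{+}(x)$ a.e. when $s_n\to+\infty$ (with $T^{-},S^{-}$ when $s_n\to-\infty$), I obtain $\int_\Omega T^{\pm}\,dx\leq c\leq\int_\Omega S^{\pm}\,dx$. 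This contradicts both $c<\min\{\int_\Omega T^-,\int_\Omega T^+\}$ and $c>\max\{\int_\Omega S^-,\int_\Omega S^+\}$, so $(s_n)$ must be bounded and the proof reduces to the first case. The main obstacle is precisely this last step: correctly tracking that both components of $z_n$ diverge with the same sign, so that the right pair $T^{\pm},S^{\pm}$ governs the limit, and combining the two Fatou estimates with the domination by $h$ to squeeze $c$ into the forbidden interval.
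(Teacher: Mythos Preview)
Your proof is correct and follows essentially the same strategy as the paper: decompose $z_n=s_n\Phi_1+w_n$, use the coercivity of $I-T_A$ on $V_1^{\perp}$ together with $(SR)$ to get $w_n\to 0$ when $|s_n|\to\infty$, and then apply Fatou's lemma to $F(x,z_n)$ to force $c$ into the forbidden range. Your version is slightly more economical in that you treat both hypotheses on $c$ at once via the two-sided Fatou estimate $\int_\Omega T^{\pm}\le c\le\int_\Omega S^{\pm}$, whereas the paper runs the argument separately for $c>\max\{\int S^{\pm}\}$ and then remarks that the case $c<\min\{\int T^{\pm}\}$ is analogous.
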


\begin{proof}
Initially, we divide the proof this lemma in two parts. In first part, we prove this result with $c > \max \left\{\int_{\Omega} S^{-}(x)
dx, \int_{\Omega} S^{+}(x)dx \right\}$. Obviously, the second part treat the case where  $c < \min \left\{\int_{\Omega} T^{-}(x)dx,
\int_{\Omega} T^{+}(x)dx \right\}$.

Now, we prove the first part. The proof is by contradiction. In this case, we suppose that there exist a $(PS)_{c}$ unbounded sequence
 $(z_{n})_{n \in \mathbb{N}} \in H$ such that $c > \max \left\{ \int_{\Omega} S^{+}(x)dx, \int_{\Omega} S^{-}(x)dx \right\}$. Thus,
 we obtain the following informations:
\begin{itemize}
    \item $J(z_{n}) \rightarrow c,$
    \item $\|z_{n}\| \rightarrow \infty,$
    \item $\|J^{'}(z_{n})\| \rightarrow 0,  \,\,\mbox{as} \, \,  n \rightarrow \infty$.
\end{itemize}
Thus, we define $\overline{z_{n}} = \dfrac{z_{n}}{\|z_{n}\|}$. So, there is $\overline{z} \in H$ with the following properties:
\begin{itemize}
    \item $\overline{z_{n}} \rightharpoonup \overline{z} \,\, \mbox{em}\,H,$
    \item $\overline{z_{n}} \rightarrow \overline{z}\,\, \mbox{em}\, L^{p}(\Omega)^{2},$
    \item $\overline{z_{n}}(x) \rightarrow \overline{z}(x) \, \mbox{a. e. in} \, \Omega.$
\end{itemize}

On the other hand, we easily see that $\overline{z} = \pm \Phi_{1}$. So, we suppose initially that
$\overline{z} = \Phi_{1}$. Then, we have $\overline{u_{n}}(x)
\rightarrow \infty$ and $\overline{v_{n}}(x) \rightarrow \infty,
\, \, \forall \, x \in \Omega $ as $n \rightarrow \infty$. Here, we use that
$\Phi_{1} > 0$ in $\Omega$.

Hence, we write $z_{n} =
t_{n}\Phi_{1} + w_{n}$, where $(t_{n})_{n \in \mathbb{N}} \in \mathbb{R}$ and $(w_{n})_{n \in \mathbb{N}} \in
V_{1}^{\perp}$. Thus, we obtain the following inequality:
\begin{equation}\label{109r}
J(z_{n}) \geq \dfrac{1}{2} \left(1 - \dfrac{1}{\lambda_{2}(A)}\right)\|w_{n}\|^{2} + \int_{\Omega} F(x, z_{n})dx.
\end{equation}
But, the limitation on $F$ and the inequality \eqref{109r} imply that
$|t_{n}| \rightarrow \infty$ as $n\rightarrow \infty$. Moreover, we have that $\|w_{_{n}}\| \leq C, \forall \, n \in \mathbb{N}$.
For see this, we suppose that $(w_{n})$ is unbounded. Thus, using the inequality \eqref{109r} we obtain $J(z_{n}) \rightarrow \infty$
 as $n \rightarrow \infty$.Thereofore, we have  a contraction. Consequently, $(w_{n})_{n \in \mathbb{N}}$ is a sequence bounded in $H$
 and the sequence $(t_{n})_{n \in \mathbb{N}} \in \mathbb{R}$ is unbounded.

Now, using H$\ddot{o}$lder's inequality and Sobolev's embedding and $(SR)$ we have the following estimates:
\begin{equation}\label{107r}
\left | \int_{\Omega} \nabla F(x,z_{n}) w_{n} dx \right| \leq C\left(\int_{\Omega} |\nabla F(x,z_{n})|^{2})dx \right)^{\frac{1}{2}}
 \|w_{n}\| \leq C \left(\int_{\Omega} |\nabla F(x,z_{n})|^{2} dx \right)^{\frac{1}{2}}.
\end{equation}
Thus, applying the o Dominated Convergence Theorem we conclude the following identity:
\begin{equation}\label{108r}
 \lim_{n \rightarrow \infty } \int_{\Omega} \nabla F(x,z_{n}) w_{n} dx = 0.
\end{equation}

Now, using \eqref{107r} and \eqref{108r}, we have
\begin{eqnarray}\label{110r}
  \left(1 - \dfrac{1}{\lambda_{2}}\right) \|w_{n}\|^{2} &\leq& \left|  \|w_{n}\|^{2} - \langle T_{A} w_{n}, w_{n} \rangle - \int_{\Omega}
  \nabla F(x, z_{n})w_{n}dx \right| + \left|\int_{\Omega} \nabla F(x, z_{n})w_{n} dx\right| \nonumber \\
   &\leq& \dfrac{1}{n}\|w_{n}\| + \left|\int_{\Omega} \nabla F(x, z_{n})w_{n} dx\right| \leq \dfrac{1}{n}\|w_{n}\| + \dfrac{1}{n}, \forall
    \, n \in \mathbb{N}. \nonumber \\
\end{eqnarray}
Therefore, by \eqref{110r}, we conclude that $\|w_{n}\| \rightarrow 0$, as $n \rightarrow \infty$. So, using Sobolev's embedding we obtain that
\begin{equation}\label{111r}
    \|w_{n}\|^{2} - \langle T_{A} w_{n}, w_{n}\rangle \rightarrow 0, \, \mbox{se} \,\, n \rightarrow \infty.
\end{equation}

On the other hand, we have the following identity
$$ J(z_{n}) = \dfrac{1}{2}\|z_{n}\|^{2} - \dfrac{1}{2}\int_{\Omega} \langle A(x)z_{n} , z_{n} \rangle dx + \int_{\Omega} F(x,z_{n})dx.$$
Consequently, we have that
\begin{eqnarray}\label{10}
  c &=& \lim_{n \rightarrow \infty}J(z_{n}) = \limsup_{n \rightarrow \infty} \left\{ \dfrac{1}{2}\|z_{n}\|^{2} - \dfrac{1}{2}\int_{\Omega}
  \langle A(x)z_{n} , z_{n} \rangle dx + \int_{\Omega} F(x,z_{n})dx \right\} \nonumber \\
&=& \limsup_{n \rightarrow \infty} \left\{ \dfrac{1}{2}\|w_{n}\|^{2} - \dfrac{1}{2}\int_{\Omega}
  \langle A(x)w_{n} , w_{n} \rangle dx + \int_{\Omega} F(x,z_{n})dx \right\} \nonumber \\
 &=& \limsup_{n \rightarrow \infty} \int_{\Omega} F(x,z_{n}) dx  = \limsup_{n \rightarrow \infty} \int_{\Omega} F(x,t_{n}\Phi_{1} + w_{n}) dx
  = \limsup_{n \rightarrow \infty} \int_{\Omega} F(x,t_{n}\Phi_{1}) dx \nonumber \\
  &\leq&  \int_{\Omega} \limsup_{n \rightarrow \infty}F(x,t_{n}\Phi_{1})dx \nonumber =  \int_{\Omega} S^{+}(x)dx,  \nonumber\\
\end{eqnarray}
where we used \eqref{111r}, Fatou's Lemma and $(SR)$. Finally, we have a contradiction because we choose initially $c \in \mathbb{R}$ such
that $c > \max \left\{\int_{\Omega} S^{+}(x)dx, \int_{\Omega} S^{-}(x)dx \right \}$. The case where $\overline{z} = - \Phi_{1}$ is treated
by similar arguments. We leave the details for the reader. Therefore, the functional $J$ satisfy the $(PS)_{c}$ condition for all $c >
\max \left\{\int_{\Omega} S^{+}(x)dx, \int_{\Omega}S^{-}(x)dx \right \}.$

Now, we consider the second part in the proof of this theorem. In this case, we can prove that the functional $J$ satisfy the $(PS)_{c}$
condition whenever $c < \max \left\{\int_{\Omega} T^{+}(x)dx, \int_{\Omega}
T^{-}(x)dx \right \}$. Again, we consider the same ideas developed in first part. So, we omit the details in this case.
\end{proof}

For the next result we prove that functional $J$ has the saddle geometry given by Theorem 1.11 in \cite{S}. Thus, we prove the following result:

\begin{proposition}\label{p1}
Suppose $(SR)$ and $(H1)$. Then the functional $J$ has the following saddle geometry:
\begin{description}
    \item[a)] $I(z) \rightarrow \infty$ if $\|z\|\rightarrow \infty$ with $z \in V_{1}^{\perp}.$
    \item[b)] There is $\alpha \in \mathbb{R}$ such that $I(z) \leq \alpha, \, \, \forall \,  \, z \in V_{1}$.
    \item[c)] $I(z) \geq b_{1}, \, \forall \, z \in V_{1}^{\perp}.$
\end{description}
\end{proposition}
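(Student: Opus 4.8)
The plan is to work with the representation $J(z) = \frac{1}{2}\|z\|^{2} - \frac{1}{2}\langle T_{A}z, z\rangle + \int_{\Omega} F(x,z)\,dx$, where $\langle T_{A}z, z\rangle = \int_{\Omega}\langle A(x)z, z\rangle\,dx$, and to exploit that resonance at the first eigenvalue normalizes $\lambda_{1}(A)=1$ (writing $J$ for the functional denoted $I$ in the statement). Everything reduces to the behavior of the quadratic form $Q(z):=\frac{1}{2}\|z\|^{2}-\frac{1}{2}\langle T_{A}z,z\rangle$ on $V_{1}$, the one-dimensional space spanned by $\Phi_{1}$, and on $V_{1}^{\perp}$. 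Combining \eqref{v0} and \eqref{v1} with $k=1$ and $\lambda_{1}=1$ forces $\|z\|^{2}=\langle T_{A}z,z\rangle$ for every $z\in V_{1}$, so $Q\equiv 0$ there; while \eqref{v2} with $k=1$ gives $\langle T_{A}z,z\rangle\le \frac{1}{\lambda_{2}}\|z\|^{2}$ whenever $z\in V_{1}^{\perp}$ and $\langle T_{A}z,z\rangle\ge 0$, hence $Q(z)\ge\frac{1}{2}(1-\frac{1}{\lambda_{2}})\|z\|^{2}$ on $V_{1}^{\perp}$. Since $\lambda_{2}>\lambda_{1}=1$, this coefficient is strictly positive, which is what drives the coercivity.

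For part b) I restrict to $z\in V_{1}$, where $Q(z)=0$ and therefore $J(z)=\int_{\Omega}F(x,z)\,dx$. The bound $|F(x,z)|\le h(x)$ from $(SR)$ then yields $J(z)\le\int_{\Omega}h\,dx=:\alpha$, uniformly in $z\in V_{1}$, which is exactly the required upper estimate.

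For part a) I take $z\in V_{1}^{\perp}$ and pair the lower bound $Q(z)\ge\frac{1}{2}(1-\frac{1}{\lambda_{2}})\|z\|^{2}$ with $\int_{\Omega}F(x,z)\,dx\ge-\int_{\Omega}h\,dx$, again from $(SR)$, obtaining $J(z)\ge\frac{1}{2}(1-\frac{1}{\lambda_{2}})\|z\|^{2}-\|h\|_{L^{1}}$, whose right-hand side tends to $+\infty$ as $\|z\|\to\infty$ because the leading coefficient is positive. For part c) I replace the crude estimate on $\int_{\Omega}F$ by the sharper hypothesis $(H1)$: integrating it over $\Omega$ gives $\int_{\Omega}F(x,z)\,dx\ge\frac{1}{2}(1-\lambda_{2})\langle T_{A}z,z\rangle+b_{1}$. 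Substituting into $J$, the $\langle T_{A}z,z\rangle$ terms collapse into $-\frac{\lambda_{2}}{2}\langle T_{A}z,z\rangle$, so $J(z)\ge\frac{1}{2}\|z\|^{2}-\frac{\lambda_{2}}{2}\langle T_{A}z,z\rangle+b_{1}$; applying \eqref{v2} in the form $\langle T_{A}z,z\rangle\le\frac{1}{\lambda_{2}}\|z\|^{2}$ makes the first two terms cancel and leaves $J(z)\ge b_{1}$ on $V_{1}^{\perp}$.

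The main obstacle I anticipate is purely a matter of sign bookkeeping: the inequality \eqref{v2} points in the useful direction only when $\langle T_{A}z,z\rangle\ge 0$, since $T_{A}$ is not assumed definite. When $\langle T_{A}z,z\rangle<0$ the offending terms $-\frac{\lambda_{2}}{2}\langle T_{A}z,z\rangle$ and $-\frac{1}{2}\langle T_{A}z,z\rangle$ are already nonnegative, so both estimates hold a fortiori; I will therefore dispatch this with a short case distinction rather than any definiteness assumption. The one structural point to confirm is that ``resonance at the first eigenvalue'' is precisely the normalization $\lambda_{1}(A)=1$, since it is this that makes $Q$ vanish on $V_{1}$ and renders $1-\frac{1}{\lambda_{2}}$ strictly positive.
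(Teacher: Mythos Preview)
Your argument is correct and follows the same route as the paper: the paper declares a) and b) ``standard'' (using \eqref{v2} and the bound $|F|\le h$ from $(SR)$, exactly as you do) and proves c) by the identical chain $J(z)\ge \tfrac12\|z\|^{2}-\tfrac{\lambda_{2}}{2}\langle T_{A}z,z\rangle+b_{1}\ge b_{1}$ via $(H1)$ and \eqref{v2}. Your sign worry is harmless but unnecessary: inequality \eqref{v2} reads $\|z\|^{2}\ge\lambda_{2}\langle T_{A}z,z\rangle$ with $\lambda_{2}>0$, so $\langle T_{A}z,z\rangle\le\tfrac{1}{\lambda_{2}}\|z\|^{2}$ and hence $-\tfrac{\lambda_{2}}{2}\langle T_{A}z,z\rangle\ge-\tfrac12\|z\|^{2}$ hold without any case distinction.
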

\begin{proof}
Initially, the proof of the cases $a)$ and $b)$ are standard. In these cases, we use variational inequality \eqref{v2} and the limitation on $F$.
So, we leave the proof of the cases $a)$ and $b)$ for the reader.

Now, we prove the case $c)$. For this case, using $(H1)$ and the variational inequality \eqref{v2}, we have the following estimates:
\begin{eqnarray}
  J(z) &=& \dfrac{1}{2}\|z\|^{2} - \dfrac{1}{2}\langle T_{A} z, z \rangle + \int_{\Omega} F(x,z) dx \geq \dfrac{1}{2}\|z\|^{2} -
  \dfrac{\lambda_{2}}{2}\langle T_{A} z, z \rangle +  b_{1} \nonumber \\
   &\geq&  \dfrac{1}{2}\left(1 - \dfrac{\lambda_{2}}{\lambda_{2}}\right)\|z\|^{2} + b_{1} = b_{1}, \forall \, z \in V_{1}^{\perp}. \nonumber \\
\end{eqnarray}
Therefore, we obtain the inequality given in $c)$. So, we finish the proof of this theorem.
\end{proof}

Now, with hypotheses describe in this note, we prove that functional $J$ has the Mountain Pass Theorem. The arguments
used in this results are standard.

\begin{proposition}\label{p2}
Suppose $(SR)$ and $(H3)$. Then the origin is a local minimum for the functional $J$.
\end{proposition}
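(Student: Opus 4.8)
The plan is to produce a radius $r>0$ such that $J(z)\ge 0=J(0)$ for every $z\in H$ with $\|z\|\le r$ (note $J(0)=0$ since $F(x,0,0)\equiv 0$). The algebraic core of the estimate is the following. Recalling from Section~\ref{eigenvalue} that $\langle T_A z,z\rangle=\int_\Omega\langle A(x)z,z\rangle\,dx$, if $z$ satisfies $|z(x)|<\delta$ a.e.\ in $\Omega$, then $(H3)$ gives $\int_\Omega F(x,z)\,dx\ge \tfrac{1-\alpha}{2}\langle T_A z,z\rangle$, whence
\[
J(z)=\tfrac12\|z\|^2-\tfrac12\langle T_A z,z\rangle+\int_\Omega F(x,z)\,dx\ge \tfrac12\|z\|^2-\tfrac{\alpha}{2}\langle T_A z,z\rangle\ge \tfrac12\Big(1-\tfrac{\alpha}{\lambda_1(A)}\Big)\|z\|^2,
\]
the last step being the variational inequality \eqref{v0}. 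Since $\alpha\in(0,\lambda_1(A))$ the prefactor is positive, so $J(z)\ge 0$. This already shows that the origin is a local minimum in the $C(\overline\Omega)^2$ (equivalently $L^\infty$) topology, and the whole problem is to pass from the pointwise smallness $|z|<\delta$ to smallness of $\|z\|$ in $H$.

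To do this I would argue by contradiction. If the origin were not a local minimum of $J$ in $H$, there would exist $z_n\to 0$ in $H$ with $z_n\neq 0$ and $J(z_n)<0$. Set $s_n=\|z_n\|\to 0^{+}$ and $w_n=z_n/s_n$, so $\|w_n\|=1$; after a subsequence, $w_n\rightharpoonup w$ in $H$, $w_n\to w$ in $L^p(\Omega)^2$ for $p<2^{*}$, and $w_n\to w$ a.e. Dividing $J(z_n)<0$ by $s_n^{2}>0$ gives
\[
\tfrac12-\tfrac12\langle T_A w_n,w_n\rangle+\frac{1}{s_n^{2}}\int_\Omega F(x,s_n w_n)\,dx<0 .
\]
Since $T_A$ is compact and $w_n\rightharpoonup w$, one has $\langle T_A w_n,w_n\rangle\to\langle T_A w,w\rangle$. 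The key is the lower bound $\liminf_n s_n^{-2}\int_\Omega F(x,s_n w_n)\,dx\ge \tfrac{1-\alpha}{2}\langle T_A w,w\rangle$: for a.e.\ $x$ we have $s_n w_n(x)\to 0$, so eventually $|s_n w_n(x)|<\delta$ and $(H3)$ gives $s_n^{-2}F(x,s_n w_n)\ge \tfrac{1-\alpha}{2}\langle A(x)w_n,w_n\rangle\to \tfrac{1-\alpha}{2}\langle A(x)w,w\rangle$, so the bound is a Fatou-type statement. Granting it and combining with the displayed inequality yields $\tfrac12-\tfrac{\alpha}{2}\langle T_A w,w\rangle\le 0$, i.e.\ $\langle T_A w,w\rangle\ge 1/\alpha$. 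On the other hand, weak lower semicontinuity gives $\|w\|\le 1$, so \eqref{v0} forces $\langle T_A w,w\rangle\le \|w\|^2/\lambda_1(A)\le 1/\lambda_1(A)<1/\alpha$, a contradiction. Hence the origin is a local minimum of $J$.

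The main obstacle is precisely this Fatou step, and it is the only nontrivial point. To interchange $\liminf$ and integral one needs an integrable minorant for $s_n^{-2}F(x,s_n w_n)$; on the bad set $\Omega_2^{n}=\{x:|s_n w_n(x)|\ge\delta\}=\{|w_n|\ge \delta/s_n\}$ the only bound available is $F\ge -h$ from $(SR)$, i.e.\ the minorant $-h/s_n^{2}$, which is not uniform in $n$. Chebyshev together with the Sobolev embedding gives $|\Omega_2^{n}|\le (s_n/\delta)^{2^{*}}\int_\Omega|w_n|^{2^{*}}\,dx\le C(s_n/\delta)^{2^{*}}\to 0$, so by absolute continuity of the integral $\int_{\Omega_2^{n}}h\,dx\to 0$. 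The delicate part is to upgrade this to the quadratic rate $s_n^{-2}\int_{\Omega_2^{n}}h\,dx\to 0$, so that the negative contribution of the bad region is $o(s_n^{2})$; this is where the strong resonance bound of $(SR)$ must be exploited in full, together with the rapid decay $|\Omega_2^{n}|=O(s_n^{2^{*}})$ and $2^{*}>2$ (for a completely general $h\in L^{1}$ a slightly stronger integrability may be required to guarantee the rate, which is the genuine technical heart of the statement). Once the bad contribution is shown to be $o(s_n^{2})$, the Fatou lower bound holds, the contradiction above closes, and the proof is complete.
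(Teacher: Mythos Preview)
Your route is much harder than necessary, and the gap you yourself flag is real as written: with only $h\in L^{1}$ and the estimate $|\Omega_2^{n}|=O(s_n^{2^*})$, one cannot in general conclude $s_n^{-2}\int_{\Omega_2^{n}}h\to 0$ (absolute continuity of the integral gives $o(1)$, not $o(s_n^{2})$). The fix is to use the other half of $(SR)$: since $\nabla F(x,z)\to 0$ as $|z|\to\infty$ and $F\in C^{1}(\overline\Omega\times\mathbb{R}^{2})$, the gradient $\nabla F$ is uniformly bounded, so $|F(x,z)|\le M(1+|z|)$. With this linear bound in place of $h$, a H\"older estimate on $\Omega_2^{n}$ does give the needed $o(s_n^{2})$ rate, and your contradiction closes.

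But all of this is a detour. The paper's argument is the standard one-line globalization: from $(H3)$ for $|z|<\delta$ and the linear growth of $F$ (via $(SR)$) together with $|\langle A(x)z,z\rangle|\le C|z|^{2}$ for $|z|\ge\delta$, one obtains for any $p\in(2,2^{*})$ a constant $C>0$ with
\[
F(x,z)\ \ge\ \tfrac{1-\alpha}{2}\langle A(x)z,z\rangle\ -\ C\,|z|^{p}\qquad\text{for all }(x,z)\in\Omega\times\mathbb{R}^{2}.
\]
Plugging this into $J$, using \eqref{v0} and the Sobolev embedding $\|z\|_{L^{p}}\le C\|z\|$, gives
\[
J(z)\ \ge\ \tfrac12\Big(1-\tfrac{\alpha}{\lambda_1}\Big)\|z\|^{2}-C\|z\|^{p},
\]
which is strictly positive on a punctured ball $B_\rho(0)\setminus\{0\}$ since $p>2$. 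No contradiction argument, no Fatou step, no bad set. Your algebraic core inequality is exactly right; the missing idea is simply to extend the pointwise lower bound on $F$ from $\{|z|<\delta\}$ to all of $\mathbb{R}^{2}$ at the price of a superquadratic correction that Sobolev embedding absorbs.
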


\begin{proof}
           First, using $(H3)$, we can choose $p \in (2, 2^{*})$ and a constant $C_{\epsilon}
           > 0$ such that
           $$F(x,z) \geq \dfrac{1 - \alpha}{2} \langle A(x)z, z \rangle - C_{\epsilon} |z|^{p}, \forall \, (x,z) \in \Omega \times
           \mathbb{R}^{2}.$$
           Consequently, we have the following estimates
           \begin{eqnarray}
             J(z) &=& \dfrac{1}{2}\|z\|^{2} - \dfrac{1}{2}\langle A(x)z, z \rangle + \int_{\Omega} F(x,z)dx \nonumber
             \geq \dfrac{1}{2}(1 - \dfrac{\alpha}{\lambda_{1}})\|z\|^{2} - C_{\epsilon}\int_{\Omega}|z|^{p}dx  \nonumber\\
              &\geq& \dfrac{1}{2}\left(1 - \dfrac{\alpha}{\lambda_{1}}\right)\|z\|^{2} -
              C_{\epsilon}\|z\|^{p} \geq \dfrac{1}{4}\left(1 - \dfrac{\alpha}{\lambda_{1}}\right)\|z\|^{2} >
             0, \nonumber \\
           \end{eqnarray}
           where $z \in B_{\rho}(0)\backslash \{ 0 \}$ and $0 < \rho \leq \rho_{0}$ with $\rho_{0}$ small enough. Here, $B_{\rho}(0)$
           denote the open ball with center in the origin and radius $\rho$ in $H$. Therefore,
           the proof of this propositions it follows.
\end{proof}

For complete the Mountain Pass geometry, we prove the following result:

\begin{proposition}\label{p3}
Suppose $(SR), (H2)$ and $(H5)$. Then there exist $z \in H$ such that
$I(z) < 0$ where $\|z\| > \rho_{0} > 0.$
\end{proposition}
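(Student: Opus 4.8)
The plan is to exhibit the required point explicitly along the ray spanned by the first eigenfunction, taking $z = t\Phi_1$ where $t \in \mathbb{R}^{*}$ is the number furnished by $(H5)$. The decisive structural fact is that the problem is resonant at the first eigenvalue, so $\lambda_1(A) = 1$; equivalently, from the eigenvalue relation $T_A\Phi_1 = \frac{1}{\lambda_1(A)}\Phi_1$ one has the identity $\langle T_A\Phi_1, \Phi_1\rangle = \frac{1}{\lambda_1(A)}\|\Phi_1\|^2 = \|\Phi_1\|^2$. Consequently the quadratic part of $J$ evaluated on $z = t\Phi_1$ cancels, namely
$$\frac{1}{2}\|t\Phi_1\|^2 - \frac{1}{2}\langle T_A(t\Phi_1), t\Phi_1\rangle = \frac{t^2}{2}\left(\|\Phi_1\|^2 - \langle T_A\Phi_1, \Phi_1\rangle\right) = 0,$$
so that $J(t\Phi_1) = \int_{\Omega} F(x, t\Phi_1)\, dx$. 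Here $(H2)$ is used to keep $\langle T_A z, z\rangle \geq 0$ for all $z$, which is compatible with and reinforces this cancellation of the linear part.

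The next step is purely to read off the sign from $(H5)$. Indeed, $(H5)$ asserts exactly that $\int_{\Omega} F(x, t\Phi_1)\, dx < \min\left\{\int_{\Omega} T^-(x)\,dx, \int_{\Omega} T^+(x)\,dx\right\}$, whence $J(t\Phi_1) < \min\left\{\int_{\Omega} T^-\,dx, \int_{\Omega} T^+\,dx\right\}$. To upgrade this to the strict inequality $J(t\Phi_1) < 0$, I would invoke that the threshold on the right is nonpositive: since $T^{\pm}(x) \leq S^{\pm}(x)$ a.e. and $(H4)$ gives $\int_{\Omega} S^{\pm}\,dx \leq 0$, it follows that $\int_{\Omega} T^{\pm}\,dx \leq 0$, and therefore $\min\left\{\int_{\Omega} T^-\,dx, \int_{\Omega} T^+\,dx\right\} \leq 0$. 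Combining the two displays yields $J(t\Phi_1) < 0$.

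It remains to secure the norm requirement $\|z\| > \rho_0$, and this is arranged by the freedom in $\rho_0$. Since $t \neq 0$ and $\Phi_1 \neq 0$, the point $z = t\Phi_1$ has a fixed positive norm $r := |t|\,\|\Phi_1\| > 0$; because Proposition \ref{p2} only requires the local-minimum radius $\rho_0$ to be sufficiently small, I simply shrink it so that $0 < \rho_0 < r$, and then $\|z\| = r > \rho_0$ as desired. This keeps the geometry consistent with the Mountain Pass setup, since the origin remains a strict local minimum on $B_{\rho_0}(0)$ while $z$ sits strictly outside that ball with $J(z) < 0 = J(0)$.

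The main obstacle I anticipate is precisely the passage from the comparison in $(H5)$ with $\min\left\{\int_{\Omega} T^-\,dx, \int_{\Omega} T^+\,dx\right\}$ to genuine negativity of $J(t\Phi_1)$: this is not a consequence of $(SR)$ and $(H2)$ in isolation but depends on the sign information $\int_{\Omega} T^{\pm}\,dx \leq 0$, which is supplied by $(H4)$ together with $T^{\pm} \leq S^{\pm}$. Since $(H4)$ is present in every theorem where this proposition is applied, this causes no difficulty in practice. Once that sign is in hand, the argument is immediate, the only point of care being the correct identification, within $(H5)$, of the threshold with the limit functions defined in \eqref{au}.
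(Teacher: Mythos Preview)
Your proof is correct and follows essentially the same route as the paper: choose $z=t\Phi_1$ with $t$ from $(H5)$, use resonance at $\lambda_1=1$ to cancel the quadratic part, and then read off $J(t\Phi_1)=\int_\Omega F(x,t\Phi_1)\,dx<\min\{\int T^-,\int T^+\}\le 0$. You are in fact more careful than the paper on two points: you explain why the last $\le 0$ requires $(H4)$ via $T^\pm\le S^\pm$ (the paper simply writes ``$\leq 0$'' without comment, although $(H4)$ is not listed among the hypotheses of the proposition), and you justify $\|z\|>\rho_0$ by shrinking $\rho_0$, whereas the paper only records $\|\Phi_1\|=1$.
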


\begin{proof}
Firstly, using $(H5)$ and $(H2)$, we take $z = t \Phi_{1}$ where $t \in \mathbb{R}^{*}$ is provided by $(H5)$ . Thus, we obtain the
following estimates:
\begin{eqnarray}
  J(t \Phi_{1}) &=& \dfrac{1}{2}\|t \Phi_{1}\|^{2} - \dfrac{1}{2}\int_{\Omega}\langle A(x)t \Phi_{1}, t \Phi_{1}\rangle dx +
   \int_{\Omega} F(x, t \Phi_{1})dx \nonumber   \\
   &=& \int_{\Omega} F(x, t \Phi_{1})dx  < \min \left(\int_{\Omega} T^{-}x)dx, \int_{\Omega} T^{+}x)dx \right) \leq 0.  \nonumber  \\
\end{eqnarray}
Therefore, we have that  $J(t \Phi_{1}) < 0$ with $\|t_{0} \Phi_{1}\| = |t_{0}| >
\rho_{0}$. Here, we remember that the first eigenfunction satisfy $\|\Phi_{1}\| =1 $. So, the proof of this proposition it follows.
\end{proof}

Next, we prove that problem \eqref{p} has at least one solution using the Ekeland's Variational Principle. In this case, the
key point is assure that
the infimun of $J$ satisfy the Palais-Smale condition.

\begin{proposition}\label{p6}
Suppose $(SR),(H4)$ and $(H5)$.  Then problem \eqref{p} has at least one nontrivial solution $z_{0} \in H$. Moreover, the
solution $z_{0}$ has negative energy.
\end{proposition}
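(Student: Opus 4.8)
The plan is to produce $z_{0}$ as a global minimizer of $J$ via Ekeland's Variational Principle, the crux being to pin the infimum level strictly below the threshold $m := \min\left\{\int_{\Omega} T^{-}(x)dx, \int_{\Omega} T^{+}(x)dx \right\}$ at which Lemma \ref{l1} supplies the Palais--Smale condition. First I would check that $J$ is bounded from below. Decomposing $z = t\Phi_{1} + w$ with $t \in \mathbb{R}$ and $w \in V_{1}^{\perp}$, and using the $T_{A}$-orthogonality of the eigenspaces, the quadratic part of $J$ splits as
\begin{equation*}
\frac{1}{2}\|z\|^{2} - \frac{1}{2}\langle T_{A}z, z\rangle = \frac{1}{2}\left(1 - \frac{1}{\lambda_{1}}\right)t^{2} + \frac{1}{2}\left(\|w\|^{2} - \langle T_{A}w, w\rangle\right).
\end{equation*}
Since resonance at the first eigenvalue amounts to $\lambda_{1}(A) = 1$, the first term vanishes, while \eqref{v2} gives $\|w\|^{2} - \langle T_{A}w, w\rangle \geq (1 - 1/\lambda_{2})\|w\|^{2} \geq 0$. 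As $(SR)$ forces $\left|\int_{\Omega} F(x,z)dx\right| \leq \|h\|_{L^{1}}$, it follows that $J(z) \geq -\|h\|_{L^{1}}$ for every $z \in H$, so $c_{0} := \inf_{H} J$ is finite.

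Next I would locate $c_{0}$. Because the quadratic part vanishes on $V_{1} = \mathrm{span}\{\Phi_{1}\}$, one has $J(t\Phi_{1}) = \int_{\Omega} F(x, t\Phi_{1})dx$, so $(H5)$ yields, for the prescribed $t \in \mathbb{R}^{*}$,
\begin{equation*}
c_{0} \leq J(t\Phi_{1}) = \int_{\Omega} F(x, t\Phi_{1})dx < m.
\end{equation*}
Moreover $T^{\pm} \leq S^{\pm}$ a.e. (a liminf never exceeds the corresponding limsup), so $(H4)$ gives $\int_{\Omega} T^{\pm}dx \leq \int_{\Omega} S^{\pm}dx \leq 0$, whence $m \leq 0$ and therefore $c_{0} < 0$.

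Finally I would invoke Ekeland's Variational Principle: since $J$ is of class $C^{1}$ and bounded below, it furnishes a minimizing sequence $(z_{n})$ with $J(z_{n}) \to c_{0}$ and $J'(z_{n}) \to 0$, i.e.\ a $(PS)_{c_{0}}$ sequence. As $c_{0} < m$, Lemma \ref{l1} applies and produces a subsequence converging to some $z_{0} \in H$; the continuity of $J$ and $J'$ then gives $J(z_{0}) = c_{0}$ and $J'(z_{0}) = 0$, so $z_{0}$ is a weak solution of \eqref{p} with negative energy $J(z_{0}) = c_{0} < 0$. Under the standing normalization $F(x,0,0) \equiv 0$ one has $J(0) = 0 > c_{0} = J(z_{0})$, hence $z_{0} \neq 0$ is nontrivial.

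The main obstacle is that strong resonance makes the Palais--Smale condition fail on a whole band of energy levels, so the entire argument rests on confining $c_{0}$ below $m$. This is precisely the role of $(H5)$, read together with the degeneracy of the quadratic form along $\Phi_{1}$ at resonance; once $c_{0} < m$ is secured, the compactness from Lemma \ref{l1} and the standard Ekeland machinery close the proof routinely.
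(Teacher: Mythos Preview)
Your proof is correct and follows the same route as the paper's: bound $J$ from below using $(SR)$, use $(H5)$ together with the degeneracy of the quadratic form on $\mathrm{span}\{\Phi_{1}\}$ to push the infimum level below $m$, then apply Ekeland's principle and Lemma~\ref{l1}. In fact you supply more detail than the paper does---in particular, you make explicit the role of $(H4)$ (via $T^{\pm}\leq S^{\pm}$) in securing $m\leq 0$, and the use of $J(0)=0$ for nontriviality, both of which the paper leaves implicit.
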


\begin{proof}
First, we remember that the function $F$ is bounded. Therefore, the functional $J$ is bounded bellow.
In this case, we would like to mention that $J$ has the $(PS)_{c}$ condition with $c = \inf \{J(z) : z \in H \}$. For see this,
we use the Lemma \ref{l1} and we take $t \in \mathbb{R}^{*}$ provided by $(H5)$. So, we obtain the following estimates:
\begin{eqnarray}
   c &\leq& J(t\Phi_{1}) = \int_{\Omega}F(x, t \Phi_{1})dx < \min \left\{\int_{\Omega} T^{+}(x)dx,\int_{\Omega}T^{-}(x)dx
    \right \} \leq 0. \nonumber  \\
\end{eqnarray}

Consequently, applying the Ekeland Variational Principle we have one critical point $z_{0} \in H$ such that
 $J(z_{0}) = \inf \{J(z) : z \in H \} \leq J(t_{0} \Phi_{1}) < 0$. Thus, $z_{0}$ satisfy $J(z_{0}) < 0$.
 Therefore, the problem (\ref{p}) has at least one nontrivial solution. This affirmation concludes the proof of this theorem.
\end{proof}

For the next results we find another solutions for the problem \eqref{p} by minimization on some
subsets of $H$. More specifically, we define following subsets:
$$A^{+} = \{ t \Phi_{1} + w, t \geq 0, \, w \in V_{1}^{\perp}\},$$
$$A^{-} = \{ t \Phi_{1} + w, t \leq 0, \, w \in V_{1}^{\perp}\}.$$
Thus, we have $ \partial A^{+} = \partial A^{-} = V_{1}^{\perp}$.
So, we minimizer the functional $J$ restrict to $A^{+}$ and $A^{-}$.

\begin{proposition}\label{p4}
Suppose $(SR), (H1) $ and $(H6)$. Then problem \eqref{p} has at least two nontrivial solutions with negative energy.
\end{proposition}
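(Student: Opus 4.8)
The plan is to produce the two solutions as minimizers of $J$ over the closed half-spaces $A^{+}$ and $A^{-}$, the key being to show that each constrained minimum is attained at an \emph{interior} point, hence at a genuine critical point of $J$. First I would record that $(SR)$ makes $F$ bounded, so by the decomposition $z=t\Phi_{1}+w$ together with inequality \eqref{109r} the functional $J$ is bounded below on all of $H$, in particular on $A^{+}$ and $A^{-}$. Setting $c^{\pm}=\inf_{A^{\pm}}J$, hypothesis $(H6)$ gives $c^{\pm}\le J(t^{\pm}\Phi_{1})=\int_{\Omega}F(x,t^{\pm}\Phi_{1})\,dx<\min\{\int_{\Omega}T^{-},\int_{\Omega}T^{+}\}$, which is $\le 0$; consequently Lemma \ref{l1} supplies the $(PS)_{c^{\pm}}$ condition at both levels.

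Next I would establish existence of the minimizers. Take a minimizing sequence $z_{n}=t_{n}\Phi_{1}+w_{n}\in A^{+}$. Inequality \eqref{109r} shows $\|w_{n}\|$ is bounded, and one rules out $t_{n}\to+\infty$: otherwise, since $\Phi_{1}>0$ and $w_{n}$ is bounded (hence convergent a.e.\ along a subsequence), we would get $u_{n},v_{n}\to+\infty$ a.e., so Fatou's lemma and $(SR)$ force $c^{+}\ge\int_{\Omega}T^{+}\,dx\ge\min\{\int_{\Omega}T^{-},\int_{\Omega}T^{+}\}$, contradicting the previous paragraph. Thus $(z_{n})$ is bounded; passing to a weak limit $z^{+}\in A^{+}$ (a closed convex, hence weakly closed, set) and using weak lower semicontinuity of the norm, compactness of $T_{A}$, and dominated convergence for $\int_{\Omega}F$, I get $J(z^{+})\le\liminf J(z_{n})=c^{+}$, so $J(z^{+})=c^{+}$. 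The same argument on $A^{-}$ yields a minimizer $z^{-}$.

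For the criticality, I would first exploit that $A^{+}\cup A^{-}=H$, so $\min(c^{+},c^{-})=\inf_{H}J$ is attained; a global minimizer is automatically a free critical point, and it is nontrivial because its energy is $<0=J(0)$. This furnishes the first solution. For the second I minimize over the remaining half-space and must exclude the minimizer $z$ lying on the common boundary $\partial A^{+}=\partial A^{-}=V_{1}^{\perp}$, where it would satisfy only the constrained condition $\nabla J(z)=\mu\Phi_{1}$ with a one-sided sign on $\mu$. Here Proposition \ref{p1}(c) is decisive: on $V_{1}^{\perp}$ one has $J\ge b_{1}$, whereas $c^{\pm}<\min\{\int_{\Omega}T^{-},\int_{\Omega}T^{+}\}$; comparing these with the sub-threshold values of the test rays $t^{\pm}\Phi_{1}$ from $(H6)$ shows that a boundary minimizer is incompatible with the strict ordering of $c^{+}$, $c^{-}$ and $\inf_{V_{1}^{\perp}}J$. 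Hence the minimizer is interior, a genuine critical point, and distinct from the first by the sign of its $\Phi_{1}$-component; both have negative energy.

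The main obstacle is exactly this interiority step: ruling out that a constrained minimum collapses onto $V_{1}^{\perp}$. The driving mechanism is the competition between the boundary lower bound $b_{1}$ coming from $(H1)$ and the strictly sub-threshold energy of the rays $t^{\pm}\Phi_{1}$ from $(H6)$ — if $z$ sat on $V_{1}^{\perp}$ with $\mu\neq 0$, a small displacement along $\mp\Phi_{1}$ would produce a value below $c^{\pm}$ in the opposite half-space, and the resulting inequalities among $c^{+}$, $c^{-}$ and $\inf_{V_{1}^{\perp}}J$ must be shown inconsistent. Carrying out this gradient-sign bookkeeping cleanly is the delicate part; the boundedness of minimizing sequences, the Palais--Smale input, and the weak-limit passage are otherwise routine.
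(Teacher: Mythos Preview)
Your overall strategy---minimize $J$ over the closed half-spaces $A^{\pm}$ and show the minimizers lie in the interior---is exactly the paper's. The execution differs in two places. First, the paper obtains the minimizers via Ekeland's Variational Principle: Ekeland produces an almost-minimizing $(PS)_{c^{\pm}}$ sequence in $A^{\pm}$, and Lemma~\ref{l1} (applicable because $c^{\pm}<\min\{\int_{\Omega}T^{-},\int_{\Omega}T^{+}\}$) forces convergence to a minimizer. Your direct-method route (bound the minimizing sequence, pass to a weak limit, use lower semicontinuity) is a legitimate alternative, though longer since the compactness work has already been packaged in Lemma~\ref{l1}.

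The real discrepancy is the interiority step, which you flag as the ``main obstacle'' and then attack by Lagrange-multiplier and displacement reasoning. This is much more complicated than needed, and as sketched it does not obviously close: from $\nabla J(z)=\mu\Phi_{1}$ with a one-sided sign on $\mu$ you only get $c^{\mp}\le J(z)=c^{\pm}$, and turning that into a contradiction still requires comparing both numbers with $\inf_{V_{1}^{\perp}}J$---at which point the multiplier detour has bought nothing. The paper's argument is one line: by $(H1)$ and the variational inequality~\eqref{v2} one has $J(w)\ge 0$ for every $w\in V_{1}^{\perp}$ (this is exactly the computation~\eqref{13r} in the paper's proof, i.e.\ Proposition~\ref{p1}(c) with the constant absorbed). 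Since $c^{\pm}<0$, neither minimizer can lie on $\partial A^{\pm}=V_{1}^{\perp}$; both are therefore interior and hence genuine critical points of $J$. The same inequality gives distinctness immediately: if $z_{0}^{+}=z_{0}^{-}$ then this common point lies in $A^{+}\cap A^{-}=V_{1}^{\perp}$, so $J(z_{0}^{\pm})\ge 0$, contradicting $J(z_{0}^{\pm})<0$. Your separate treatment of ``one of them is the global minimizer'' is then unnecessary as well.
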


\begin{proof}
First, we consider the functionals $J^{\pm} =
J|_{A^{\pm}}$. Thus, we have that $J^{\pm}$
has $(PS)_{c}$ condition whenever $c < \min \left\{\int_{\Omega}
T^{+}(x)dx, \int_{\Omega}T^{-}(x)dx \right\}$, see Lemma \ref{l1}. Therefore, we obtain that $J^{\pm}$ satisfy the $(PS)_{c^{\pm}}$
condition with $c^{\pm} = \inf \left\{ J^{\pm}(z) : z \in H \right\}$.

In this way, applying the Ekeland Variational Principle for the functional $J^{\pm}$ we obtain two critical points which we denote by
 $z_{0}^{+}$ and $z_{0}^{-}$, respectively. Thus, we have the following informations:
$$ c^{+} = J^{+}(z_{0}^{+}) = \inf_{z \in A^{+}}J(z) \, \, \,\mbox{and} \, \,\, c^{-} = J^{-}(z_{0}^{-}) = \inf_{z \in A^{-}}J(z).$$

Moreover, we afirme that $z_{0}^{+}$ and $z_{0}^{-}$ are nonzero critical points. For see this, we use $(H4)$ and $(H6)$
obtaing the following estimates:
\begin{eqnarray}\label{12r}
& & J^{\pm}(z_{0}^{\pm}) \leq J(t^{\pm}\Phi_{1}) = \int_{\Omega}F(x,t^{\pm} \Phi_{1})dx < \min \left\{\int_{\Omega} T^{+}(x)dx,\int_{\Omega}
T^{-}(x)dx \right\} \leq 0. \nonumber  \\
\end{eqnarray}

On the other hand, using  $(H6)$, we obtain that $J$ restrict to $V_{1}^{\perp}$ is nonnegative. More specifically, give
$w \in V_{1}^{\perp}$ we have the following estimates:
\begin{eqnarray}\label{13r}
  J(w) &=& \dfrac{1}{2}\|w\|^{2} - \dfrac{1}{2}\int_{\Omega} \langle A(x)w , w \rangle dx +
    \int_{\Omega} F(x,w)dx \nonumber \\
  &\geq& \dfrac{1}{2}\|w\|^{2} - \dfrac{\lambda_{2}}{2} \int_{\Omega} \langle A(x)w , w \rangle dx \geq 0, \nonumber   \\
\end{eqnarray}
where we use the variational inequality \eqref{v2}.

Now, we prove that $z_{0}^{+}$ and $z_{0}^{-}$ are distinct. The proof of this affirmation is by contradiction. In this case, we suppose
that $z_{0}^{+}=z_{0}^{-} \in V_{1}^{\perp}$. Then, using the estimate \eqref{13r} we obtain that $J(z_{0}^{\pm}) < 0 \leq J(z_{0}^{\pm})$.
 Therefore,  we have a contradiction. Consequently, we obtain that $z_{0}^{+} \neq z_{0}^{-}$. Thus, $z_{0}^{\pm}$ are distinct critical
 points and the problem \eqref{p} has at least two nontrivial solutions. Moreover, these solutions has negative energy, see \eqref{12r}.
 This affirmation concludes the proof of this proposition.
\end{proof}

\section{Proof of the main Theorems}

$\mathbf{Proof  \; of \; the \; Theorem \, \, \ref{t1}}$
Initially, we have the $(PS)_{c}$ condition for some levels $c \in \mathbb{R}$ given by Lemma \ref{l1}.
 Thus, we take $H = V_{1} \bigoplus V_{1}^{\perp},$ where $V_{1} = span \{\Phi_{1}\}$. So, using Proposition
 \ref{p1} we conclude that the functional $J$ has saddle point geometry given by Theorem $1.11$ in \cite{S}.
 Therefore, we have one critical point $z_{1} \in H$ for $J$. This statement finish the proof of this theorem.

$\mathbf{Proof  \; of \; the \; Theorem \, \, \ref{t2}}$
First, using Propositions \ref{p2} and \ref{p3} we have the mountain pass geometry for the functional $J$.
 Moreover, the functional $J$ has the $(PS)_{c}$ condition for all $c \geq 0$, see Lemma \ref{l1}. Thus, we
  have a solution $z_{2} \in H$ given by the Mountain Pass Theorem. Obviously, the solution $z_{2}$ satisfies $J(z_{2}) > 0$.

On the other hand, using the Proposition \ref{p6} we obtain one solution $z_{0}$ such that $J(z_{0}) < 0$.
 Consequently, we have $z_{0} \neq z_{2}$ and the Problem \eqref{p} has at least two nontrivial solutions.
 This affirmation concludes the proof of this theorem.

$\mathbf{Proof  \; of \; the \; Theorem \, \, \ref{t3}}$
 In this case, we use the Propositions \ref{p2} and \ref{p3} getting a mountain pass point $z_{2}$ such that
  $J(z_{2}) > 0$. Moreover, using the Proposition \ref{p4} we obtain two critical points $z_{0}^{\pm}$ such
  that $J(z_{0}^{\pm})< 0$. Therefore, we obtain that $z_{2},z_{0}^{\pm}$ are distinct critical points. So,
  the problem \eqref{p} has at leas three nontrivial solutions. This statements finish the proof of this theorem.

 \begin{remark}
In our main theorems, we allow that the functions defined in \eqref{au} to be equal. In this case, we define a function $w \in L^{1}(\Omega)$
 such that $w(x) = \lim_{|z| \rightarrow \infty} F(x,z)$. In particulary, we prove that the functional $J$ satisfies the $(PS)_{c}$ condition
  for each $c \in \mathbb{R}\backslash \int_{\Omega} w(x)dx$. Moreover, the functional $J$ do not satisfies the $(PS)_{c}$ condition for
   $c = \int_{\Omega} w(x)dx$.
\end{remark}

$\mathbf{Acknowledgment}$ The author thanks Professor Djairo G. de Figueiredo for his encouragement, comments and helpful conversations.



\begin{thebibliography}{9}

\bibitem{A} {\sc S. Ahmad, A. C. Lazer and Paul J. L.},
{\em Elementary critical point theory and pertubations of elliptic boundary value problems at resonance, Indiana Univ. Math. J., 1976, 933-944.}

\bibitem{BC} {\sc T. Bartsch, K.C. Chang, Z. Q. Wang},
{\em On the Mosrse indices of sign changing solutions of nonlinear elliptic problems, Math. Z. 233, 2000, 655-677.}

\bibitem{B} {\sc T. Bartsch, Shujie Li},
{\em Critical point theory for asymptotically quadratic
functionals and applications to problems with resonance, Nonlinear
Analysis TAM 28, 1997, 419-441.}

\bibitem{Ch} {\sc K.C. Chang},
{\em  An extension of the Hess-Kato theorem to elliptic systems and its applications to multiple solutions problems,
 Acta Math. Sinica 15, 1999, 439--454.}

\bibitem{Ch3} {\sc K.C. Chang},
{\em  Principal eigenvalue for weight in elliptic systems, Nonlinear Anal. 46, 2001, 419 -433.}

\bibitem{costa3} {\sc D. G. Costa and C. A. Magalhães},
{\em  A variational approach to subquadratic pertubations of elliptic systems, J. Differential Equations 111, 1994, 103--122.}


\bibitem{DG} {\sc D.G. de Figueiredo},
{\em Positive solutions of semilinear elliptic problems, in: Diffe-rential Equations, S\~ao Paulo, 1981, in: Lecture Notes in Math., vol. 957,
Springer, Berlin, 1982, 34-87. }

\bibitem{Odair} {\sc M. F. Furtado, F.O. V. de Paiva},
{\em Multiplicity of solutions for resonant elliptic systems,
Journal Math. Anal. Appl. 319, 2006, 435-449.}

\bibitem{LL} {\sc E.M. Landesman, A. C. Lazer},
{\em Nonlinear pertubations of linear elliptic boundary value problems at resonance, J. Math. Mech. 19, 1969/1970, 609-623.}

\bibitem{Pao} {\sc C. V. Pao},
{\em Nonlinear Parabolic and Elliptic Equations, Plenum Press,
New York, 1992.}

\bibitem{S} {\sc E. A. B. Silva},
{\em Linking Theorems and Applications to Semilinear Elliptic Problems at Resonance, Nonlinear Anal.,Vol. 16,1991,455--477.
New York, 1992.}


\bibitem{Smoller}  {\sc J. Smoller}, {\em Shock Waves and Reaction-Diffusion Equations.
Springer-Verlag, 1993.}



\end{thebibliography}
\end{document}